\tikzstyle{vertex}=[circle,draw=black,fill=black,inner sep=0,minimum size=3pt,text=white,font=\footnotesize]
\newtheorem*{thm*}{Theorem}
\newtheorem{thm}{Theorem}
\newtheorem{lemma}[thm]{Lemma}
\newtheorem{conjecture}[thm]{Conjecture}
\newtheorem{prop}[thm]{Proposition}
\newtheorem{clm}[thm]{Claim}
\newtheorem*{proposition*}{Proposition}
\newcommand\cA{{\mathcal A}}
\newcommand\cB{{\mathcal B}}
\newcommand\cC{{\mathcal C}}
\newcommand\cF{{\mathcal F}}
\newcommand\cG{{\mathcal G}}
\newcommand\cR{{\mathcal R}}
\newcommand{\ignore}[1]{}
\title{A note on strongly and totally chain intersecting families}
\author{D\'aniel Gerbner\thanks{Research supported by the National Research, Development and Innovation Office -- NKFIH under the grants K 116769, KH 130371 and SNN 129364.}
\\
\small Alfr\'ed R\'enyi Institute of Mathematics, \\
\small \texttt{gerbner@renyi.hu}}
\date{}
\begin{document}

\maketitle

\begin{abstract} 
    Bern\'ath and Gerbner in 2007 introduced $(p,q)$-chain intersecting families of subsets of an $n$-element underlying set. Those have the property that for any $p$-chain $A_1\subsetneq A_2\subsetneq \dots \subsetneq A_p$ and $q$-chain $B_1\subsetneq B_2\subsetneq \dots \subsetneq B_q$, we have $A_p\cap B_q\neq \emptyset$. Bern\'ath and Gerbner determined the largest cardinality of such families. They also introduced strongly $(p,q)$-chain intersecting families, where $A_p\cap B_1\neq \emptyset$ and totally $(p,q)$-chain intersecting families, where $A_1\cap B_1\neq \emptyset$. They obtained some partial results on the maximum cardinality of such families. We extend those results by determining the largest cardinality of strongly $(p,q)$-chain intersecting families if $n$ is sufficiently large, and by determining the largest cardinality of totally $(2,2)$-chain intersecting families.
\end{abstract}

	
	

\section{Introduction}

Let $[n]=\{1,\dots,n\}$ be our underlying set. We are going to study families of subsets of $[n]$. As we only deal with such families, we will simply call them families and omit describing the underlying set.
One of the fundamental results in extremal finite set theory is the theorem of Sperner \cite{S}, which states that the largest cardinality of a family without a member containing another member of the family is $\binom{n}{\lfloor n/2\rfloor}$. Such family is called a \textit{Sperner family} or \textit{antichain}. An example of a Sperner family of cardinality  $\binom{n}{\lfloor n/2\rfloor}$ is the family of all the $\lfloor n/2\rfloor$-element sets. A family of all the sets of a given size is called a \textit{level}.

A \textit{chain of length $k$} is a family of $k$ sets pairwise in containment relation, i.e., $F_1\subsetneq\dots\subsetneq F_k$, thus an antichain shares at most one member with any chain. A natural generalization is to assume that there is no chain of length more than $k$ in a family $\cF$. Such families are called \textit{$k$-Sperner} families, their largest cardinality was determined by Erd\H os \cite{Er}. It is the cardinality of the \textit{middle $k-1$ levels}, which are the levels of size at least $\lfloor (n-k+1)/2\rfloor$ and at most $\lfloor (n+k-1)/2\rfloor$.

Another well-studied class of properties arises from the intersection of sets. A family is called \textit{intersecting} if any two members share at least one element. Erd\H os, Ko and Rado \cite{EKR} determined the largest cardinality of intersecting families and $r$-uniform intersecting families. The first is trivial: at most one of a set and its complement can be in an intersecting family, giving the upper bound $2^{n-1}$, and there are several intersecting families of cardinality $2^{n-1}$. We mention two of them: all the sets containing a fixed element, and all the sets of size more than $n/2$, together with the sets of size $n/2$ containing a fixed element, if $n$ is even. In the $r$-uniform case, the largest cardinality turns out to be $\binom{n-1}{r-1}$ if $r\le n/2$, with equality in the case of all the $r$-element sets containing a fixed element (if $r>n/2$, then every pair of $r$-element sets intersect).

There are several variants of the above problems, see \cite{gp} for a collection of them. It is a natural idea to combine Sperner-type and intersection properties. Intersecting Sperner and intersecting $k$-Sperner families were studied e.g. in \cite{milne,fra,gerbner,gmt}.

Bern\'ath and Gerbner \cite{BG} introduced a different combination of these properties. The family can contain long chains and disjoint sets, but cannot contain disjoint sets on the top of long chains. More precisely, a family $\cF$ is \textit{$(p,q)$-chain intersecting} if $\cF$ does not contain sets $A_1\subsetneq A_2\subsetneq \dots \subsetneq A_p$ and $q$-chain $B_1\subsetneq B_2\subsetneq \dots \subsetneq B_q$ with $A_p\cap B_q= \emptyset$.

For a positive integer $z$, we call the family consisting of all the sets of size at least $n-z+1$ the \textit{upper $z$ levels}. The \textit{upper $z+1/2$ levels} is the upper $z$ levels plus  the sets of size $n-z$
containing a fixed element.

\begin{thm}[Bern\'ath and Gerbner \cite{BG}]\label{bege} The largest cardinality of a $(p,q)$-chain-intersecting family is equal to the
cardinality of the upper $(n+p+q-1)/2$ levels.
\end{thm}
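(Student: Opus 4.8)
The plan splits into the (routine) optimality of the construction and the (hard) matching upper bound, so I describe each in turn.

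\emph{The construction is $(p,q)$-chain intersecting.} Write $U$ for the upper $(n+p+q-1)/2$ levels and let $x$ be the fixed element used on its boundary level (needed only in the non-integer case). Given chains $A_1\subsetneq\dots\subsetneq A_p$ and $B_1\subsetneq\dots\subsetneq B_q$ in $U$, the smallest set of each chain has size at least the boundary size, and if it equals the boundary size then it, and hence the chain's top, contains $x$. A short size count then shows that in every case either $|A_p|+|B_q|>n$ (so $A_p\cap B_q\neq\emptyset$) or both tops contain $x$; concretely, in the integer case the tops have sizes at least $(n+p-q+1)/2$ and $(n-p+q+1)/2$, summing to $n+1$. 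Thus $U$ is $(p,q)$-chain intersecting, and it remains to show that no such family is larger than $U$.

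\emph{Reformulation of the upper bound.} For $A\in\cF$ let $h(A)$ be the length of the longest chain in $\cF$ with top $A$, and put $\cF^{\ge k}=\{A\in\cF:h(A)\ge k\}$; each layer $\{h=k\}$ is an antichain, and $A$ is the top of a $p$-chain in $\cF$ exactly when $h(A)\ge p$. Hence $\cF$ is $(p,q)$-chain intersecting if and only if $\cF^{\ge p}$ and $\cF^{\ge q}$ are \emph{cross-intersecting}, i.e. $A\cap B\neq\emptyset$ whenever $A\in\cF^{\ge p}$ and $B\in\cF^{\ge q}$. Assuming $p\le q$ we get $\cF^{\ge q}\subseteq\cF^{\ge p}$. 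Now in the integer case $U$ is exactly $\{A:|A|>m\}$ with $m=(n-p-q+1)/2$, and it suffices to build an injection $\cF\to U$. Every $A\in\cF$ with $|A|>m$ already lies in $U$ and is sent to itself, so the task is to inject the small sets $\{A\in\cF:|A|\le m\}$ into $U\setminus\cF$. The natural device is complementation $A\mapsto A^c=[n]\setminus A$: since $|A|\le m$ forces $|A^c|\ge n-m>m$, this is injective and lands in the correct levels, and it succeeds for every small $A$ with $A^c\notin\cF$. For $p=q=1$ the hypothesis is exactly that $\cF$ is intersecting, no complementary pair lies in $\cF$, and this injection already yields $2^{n-1}$; the non-integer case is handled analogously, splitting the boundary level by $x$.

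\emph{The main obstacle.} The difficulty is precisely the \emph{bad pairs}: small sets $A\in\cF$ with $A^c\in\cF$ as well, for which the complement map is blocked. These can genuinely occur once $p,q\ge2$ (e.g.\ $\cF=\{A,A^c\}$ is vacuously chain intersecting), so the map must be rerouted for them. The cross-intersecting reformulation gives leverage: $A$ and $A^c$ cannot both have height $\ge q$, so at least one member of each bad pair lies in the $(q-1)$-Sperner family $\{A\in\cF:h(A)<q\}$, whose size is governed by Erd\H os's theorem. The plan is therefore to split $\cF$ along $h$, bounding the low part by the $k$-Sperner estimate and the high part $\cF^{\ge p}$ via the cross-intersecting/complement argument, and to verify that the two contributions sum to exactly $|U|$. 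The hard part is that these two structures are coupled: $h$ is a global quantity, their independent optima overshoot $|U|$, so they cannot be maximised separately. I expect to resolve this either by an exchange/compression that pushes $\cF$ toward a union of levels while preserving both the membership relations $h(\cdot)\ge p,\ \ge q$ and the cross-intersecting property, or by induction on $n$: splitting $\cF$ according to whether a set contains $n$ reduces the problem to a \emph{cross} version, namely a pair $(\cF_+,\cF_-)$ on $[n-1]$ with no $p$-chain in one and $q$-chain in the other having disjoint tops. Carrying out this reduction, and in particular isolating the correct strengthened cross-pair induction hypothesis, is where the real work lies.
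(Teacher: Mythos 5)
Your proposal establishes only the easy half of the theorem. The verification that the upper $(n+p+q-1)/2$ levels are $(p,q)$-chain intersecting is correct (both in the integer case, via the size count summing to $n+1$, and in the half-level case via the fixed element), and your reformulation via the height function $h$ --- that $\cF$ is $(p,q)$-chain intersecting iff $\cF^{\ge p}$ and $\cF^{\ge q}$ are cross-intersecting --- is accurate. But for the upper bound, which is the substance of the theorem, you give a programme rather than a proof: you identify the obstruction (complementary pairs block the injection; the $(q-1)$-Sperner bound on $\{h<q\}$ and the complement argument on $\cF^{\ge p}$ cannot be optimized independently because "their independent optima overshoot $|U|$"), and then you state that you "expect to resolve" the coupling by compression or by an induction on $n$ whose strengthened hypothesis you have not formulated. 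That unresolved coupling is precisely the difficulty; nothing in the proposal shows either route can be carried out, so the theorem is not proved.

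For comparison, the paper's route (following \cite{BG}) dissolves the coupling differently and globally. One first observes that a $(p,q)$-chain intersecting family is $(p+q-1)$-complementing-chain-pair-free: if $F_1\subsetneq\dots\subsetneq F_{p+q-1}$ and all complements lie in $\cF$, then $F_1\subsetneq\dots\subsetneq F_p$ is a $p$-chain and $\overline{F_{p+q-1}}\subsetneq\dots\subsetneq\overline{F_p}$ is a $q$-chain whose tops $F_p$ and $\overline{F_p}$ are disjoint. This single chain-with-complements condition replaces your two coupled conditions. Then Lemma \ref{begelem} bounds the \emph{weight} of any such family restricted to the cyclic intervals (Katona's circle method), and the permutation method converts the local weighted bound into the cardinality bound. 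Two advantages are worth internalizing: the local bound on a circle of $n$ intervals is a finite, checkable optimization, so no induction hypothesis needs to be invented; and the circle (rather than a single chain-pair, or your global injection) is exactly what produces the correct answer in the half-level case, where the extremal family takes only the sets through a fixed element on the boundary level --- the case your compression/injection plan would have the most trouble reproducing, since a chain-pair argument inherently loses a factor there (it gives $\binom{n}{i}/2$ where the truth is $\binom{n-1}{i-1}$).
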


Bern\'ath and Gerbner \cite{BG} also introduced two variants of chain intersecting families; when the bottom sets in the chains have to intersect and when the top of one chain has to intersect the bottom of the other chain. More precisely, a family $\cF$ is totally \textit{$(p,q)$-chain intersecting} if $\cF$ does not contain sets $A_1\subsetneq A_2\subsetneq \dots \subsetneq A_p$ and $q$-chain $B_1\subsetneq B_2\subsetneq \dots \subsetneq B_q$ with $A_1\cap B_1= \emptyset$, and strongly \textit{$(p,q)$-chain intersecting} if $\cF$ does not contain sets $A_1\subsetneq A_2\subsetneq \dots \subsetneq A_p$ and $q$-chain $B_1\subsetneq B_2\subsetneq \dots \subsetneq B_q$ with $A_p\cap B_1= \emptyset$. 

\begin{conjecture}[Bern\'ath and Gerbner \cite{BG}]\label{conj1}
If $\cF$ is a strongly $(p, q)$-chain-intersecting, then $\cF\le \max\{|\cR_1|,\cR_2|\}$, where $\cR_1$ is the upper $(n+p)/2$ levels and $\cR_2$ is the middle $q-1$ largest levels.
\end{conjecture}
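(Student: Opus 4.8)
The plan is to prove the conjectured bound for all sufficiently large $n$, where it simplifies. Since $\cR_1$ consists of every set of size at least $s=(n-p)/2+1$, and this threshold lies below $n/2$, we have $|\cR_1|\ge 2^{n-1}$, whereas $|\cR_2|\le (q-1)\binom{n}{\lfloor n/2\rfloor}=O(2^n/\sqrt n)$; hence $|\cR_1|>|\cR_2|$ once $n$ is large, and the target reduces to showing $|\cF|\le|\cR_1|$.

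First I would record the chain statistics of $\cF$. For $F\in\cF$ let $h(F)$ and $d(F)$ be the lengths of the longest chains of $\cF$ ending at, respectively beginning at, $F$, and set $\mathcal T=\{F:h(F)\ge p\}$ (the tops of $p$-chains) and $\mathcal B=\{F:d(F)\ge q\}$ (the bottoms of $q$-chains). The strongly-intersecting hypothesis says precisely that $\mathcal T$ and $\mathcal B$ are cross-intersecting. Two clean bounds follow. On one hand $\cF\setminus\mathcal B=\{F:d(F)\le q-1\}$ contains no $q$-chain, so it is $(q-1)$-Sperner and $|\cF\setminus\mathcal B|\le|\cR_2|$ by Erd\H os's theorem. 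On the other hand $\mathcal B$ is itself $(p,1)$-chain-intersecting: the top of any $p$-chain inside $\mathcal B$ is a $p$-top of $\cF$, and every member of $\mathcal B$ is a $q$-bottom and so meets every $p$-top; thus Theorem~\ref{bege} with $q=1$ gives $|\mathcal B|\le|\cR_1|$. Writing $|\cF|=|\mathcal B|+|\cF\setminus\mathcal B|$ already yields the (too weak) bound $|\cR_1|+|\cR_2|$.

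The heart of the argument is to delete the spurious $|\cR_2|$ term, i.e.\ to prove the joint inequality $|\mathcal B|+|\cF\setminus\mathcal B|\le|\cR_1|$. The mechanism is that the two blocks compete for the middle levels: if $\mathcal B$ is anywhere near its $(p,1)$-extremal size it must fill the upper and middle levels densely, but then almost every set of middle size extends upward within $\cF$ to a $q$-chain and hence already lies in $\mathcal B$, forcing the $(q-1)$-Sperner remainder $\cF\setminus\mathcal B$ off the middle and so far below $|\cR_2|$; conversely a genuinely large middle Sperner remainder robs $\mathcal B$ of exactly those levels. I would make this quantitative through a high/low split at $s$: trivially $|\{F\in\cF:|F|\ge s\}|\le|\cR_1|$, so it suffices to inject the low members $\{F\in\cF:|F|<s\}$ into the high sets \emph{omitted} by $\cF$. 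A low member that is not a $q$-bottom has depth $\le q-1$ and lies in the thin Sperner part; a low member that is a $q$-bottom meets every $p$-top, which (its complement being large) forbids many high supersets from being $p$-tops. Turning these forbidden configurations into genuine omissions via the cross-intersecting relation between $\mathcal T$ and $\mathcal B$ should produce the required injection, and the largeness of $n$ guarantees enough room in the upper levels to absorb all low members.

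The main obstacle is exactly this last step: producing a single injection (or an equivalent LYM/profile inequality) that charges every below-threshold member of $\cF$ to a distinct missing above-threshold set, uniformly over the \emph{global} cross-intersecting constraint rather than against one fixed $p$-top. I expect this to need either a weighted LYM inequality for the cross-intersecting pair $(\mathcal T,\mathcal B)$ combined with the Sperner bound on $\cF\setminus\mathcal B$, or a normalized-matching / compression argument on a symmetric chain decomposition that pushes low members upward while preserving the strongly $(p,q)$-chain-intersecting property; checking that such a push never creates a disjoint top-of-$p$-chain / bottom-of-$q$-chain pair is the delicate point, and is where the hypothesis that $n$ is sufficiently large is consumed.
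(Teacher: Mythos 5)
Your setup is correct as far as it goes: $\cT$ (tops of $p$-chains) and $\cB$ (bottoms of $q$-chains) are indeed cross-intersecting, $\cF\setminus\cB$ is $(q-1)$-Sperner, and $\cB$ is $(p,1)$-chain-intersecting, so Erd\H os's theorem and Theorem~\ref{bege} give $|\cF|\le |\cR_1|+|\cR_2|$. But the proof stops there. The entire content of the statement is the passage from this sum to the maximum, and that step --- your claimed inequality $|\cB|+|\cF\setminus\cB|\le |\cR_1|$ --- is never established. It is motivated only by a heuristic (``the two blocks compete for the middle levels'') and then deferred to one of several speculative devices: a stability version of Theorem~\ref{bege}, a weighted LYM inequality for the cross-intersecting pair, or a compression argument preserving the strongly $(p,q)$-chain-intersecting property. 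None of these is an available result, and each is a substantial project in itself. In particular, Theorem~\ref{bege} gives only the extremal cardinality, not the stability statement that your ``if $\cB$ is anywhere near extremal, it fills the middle levels densely'' step would require; and up-compressions are exactly the kind of operation that can create a disjoint ($p$-top, $q$-bottom) pair, as you yourself concede. So what you have is a plan whose key lemma is missing, not a proof.

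It is worth seeing how the paper escapes this difficulty, because it does so by changing the arena rather than by proving your injection. The paper works on Katona's circle (Lemma~\ref{lemm1}), where the dichotomy you want holds \emph{locally with a max rather than a sum}: the trace of $\cF$ on a single chain-pair either is an intersecting family union a $(p-1)$-Sperner family (weight at most $x$, the weight of the upper $(n+p)/2$ levels), or consists of at most $2q-2$ sets (weight at most $y$, the weight of the $q-1$ middle levels) --- the second case being forced on any chain-pair through a $p$-chain of $\cF$ whose complements also lie in $\cF$. If no such configuration exists, Lemma~\ref{begelem} finishes; otherwise one of the $n$ rotated chain-pairs is deficient, and averaging shows that the saving $x-y=\Omega(2^n)$ from that single deficient pair dominates the half-level correction $(p-1)\binom{n}{(n-p+1)/2}/2=O(2^n/\sqrt{n})$, which is the only place ``$n$ sufficiently large'' is used; the permutation method then converts the weighted circle bound into the cardinality bound. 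Your global decomposition has no analogue of this averaging step, and repairing it would require proving the injection lemma you postpone --- which I expect to be essentially as hard as the original problem.
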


They proved this conjecture in some cases.

\begin{prop}[Bern\'ath and Gerbner \cite{BG}]\label{prop1}
Conjecture \ref{conj1} holds if $n+p$ is even or $p\ge q$ or $p=1$. 
\end{prop}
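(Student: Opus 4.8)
The plan is to work with the height and depth functions of $\cF$. For $F\in\cF$ let $h(F)$ be the maximum length of a chain of $\cF$ with top $F$ and $d(F)$ the maximum length of a chain of $\cF$ with bottom $F$, and set $\cH=\{F\in\cF:h(F)\ge p\}$ and $\cG=\{F\in\cF:d(F)\ge q\}$, the tops of $p$-chains and the bottoms of $q$-chains. The hypothesis is then exactly that $\cH$ and $\cG$ are cross-intersecting, i.e. $A\cap B\neq\emptyset$ for all $A\in\cH$ and $B\in\cG$. Two observations are immediate: $\cF\setminus\cH$ contains no $p$-chain and $\cF\setminus\cG$ contains no $q$-chain, since the top, resp. bottom, of such a chain would put it into $\cH$, resp. $\cG$. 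In particular, if $\cF$ has no $q$-chain at all then $\cG=\emptyset$, the longest chain in $\cF$ has length at most $q-1$, and Erd\H os's theorem gives $|\cF|\le|\cR_2|$. So I may assume $\cG\neq\emptyset$, and the presence of a genuine $q$-chain is the leverage I want in order to push $\cF$ up into the high levels.

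The core of the plan is to reduce to the case where $\cF$ is a union of consecutive levels. A short computation classifies which such families are strongly $(p,q)$-chain intersecting: a union of levels $[a,b]$ is admissible iff it is too short to contain a $p$-chain ($b-a+1\le p-1$), or too short to contain a $q$-chain ($b-a+1\le q-1$), or its bottom level is so high that the smallest possible $p$-chain top, of size $a+p-1$, already meets the smallest possible $q$-chain bottom, of size $a$; the last condition reads $2a+p-1>n$, i.e. $a\ge (n-p+2)/2$ when $n+p$ is even, which is precisely the bottom level of $\cR_1$. Maximizing the size over the three types gives $\max\{|\cR_2|,\ \text{middle } p-1 \text{ levels},\ |\cR_1|\}$. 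The middle $p-1$ levels are dominated by $|\cR_2|$ when $p\le q$ and are literally contained in $\cR_1$ when $n+p$ is even (both families start at level $(n-p+2)/2$, and $\cR_1$ runs all the way up to $n$), so under any of our hypotheses this third term is absorbed and the optimum over unions of levels is exactly $\max\{|\cR_1|,|\cR_2|\}$.

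It remains to show that an extremal $\cF$ may be taken to be a union of levels without loss, and this is the step I expect to be the main obstacle. I would try two routes. The first is induction on $n$: fixing an element, split $\cF$ into the sets avoiding it (which again form a strongly $(p,q)$-chain intersecting family on $n-1$ points, handled by the inductive hypothesis) and the trace of the sets containing it, and then control the contribution of the chains that mix the two parts. The second is a symmetric chain decomposition together with the normalized matching property of the Boolean lattice, compressing $\cF$ towards full levels while preserving the cross-intersection of $\cH$ and $\cG$. Either way one must argue that this rearrangement neither decreases $|\cF|$ nor destroys the property, and it is exactly here that the cross-intersection between the movable top part and the movable bottom part resists a clean compression.

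Finally, the three hypotheses are what make this reduction go through. When $n+p$ is even, $\cR_1$ is an honest union of full levels, so the target of the reduction is itself a union of levels and no boundary correction is needed; when $n+p$ is odd the threshold $2a+p-1>n$ is met with equality at the level $(n-p+1)/2$, where a smallest top and a smallest bottom can be exactly complementary, forcing the awkward half-level (all sets of that size through a fixed element) into the extremal family and breaking the pure level structure. The hypotheses $p\ge q$ and $p=1$ instead control the built-in asymmetry between tops and bottoms: when $p=1$ one has $\cH=\cF$ and the problem degenerates to a single cross-intersection estimate, and when $p\ge q$ the $p$-chains are long enough that the height of a set is governed by how far it sits above the lowest occupied level, which is what lets the disjointness constraint bite exactly at $a=(n-p+2)/2$. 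For general $p<q$ with $n+p$ odd neither mechanism is available, and one needs the finer analysis that the full determination of the maximum (for large $n$) supplies.
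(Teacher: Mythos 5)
There is a genuine gap, and you have named it yourself: the step ``it remains to show that an extremal $\cF$ may be taken to be a union of levels'' is not a technical detail to be patched in later --- it \emph{is} the theorem. The classification of which unions of consecutive levels are strongly $(p,q)$-chain intersecting, and the comparison of their sizes, is the easy part; everything difficult lives in passing from arbitrary families to level families, and for that you offer only two speculative routes (induction on $n$, or symmetric-chain compression) while conceding that the cross-intersection between $\cH$ and $\cG$ ``resists a clean compression.'' Worse, the reduction you are hoping for is provably impossible in exactly the cases with $n+p$ odd that the proposition covers. Take $p=q=2$ and $n$ odd (so $p\ge q$ applies): the family $\cR_1$, i.e.\ all sets of size at least $(n+1)/2$ together with the sets of size $(n-1)/2$ containing a fixed element, is itself strongly $(2,2)$-chain intersecting and has cardinality $2^{n-1}+\binom{n-1}{(n-3)/2}$, which strictly exceeds the maximum cardinality ($2^{n-1}$) of \emph{any} strongly $(2,2)$-chain intersecting union of full levels. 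So no size-preserving, property-preserving compression onto unions of levels can exist; the extremal family genuinely contains a half level, and a proof strategy whose target is a union of levels cannot reach it. The same obstruction appears for $p=1$, $n$ even.

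The paper's route (following \cite{BG}, as described in Section \ref{two} and executed in the proof of Lemma \ref{lemm1}) avoids compression entirely: it is a LYM-type permutation-method argument. One bounds the \emph{weight} of $\cF\cap\alpha(\cG)$ where $\cG$ is a chain-pair (a full chain together with the complements of its members): if one of the two chains meets $\cF$ in at least $q$ sets, the trace of $\cF$ is an intersecting (hence complement-free) family plus a $(p-1)$-Sperner part, with weight at most that of the upper $(n+p)/2$ levels; otherwise both chains carry at most $q-1$ sets, with weight at most that of the middle $q-1$ levels. Averaging over all permutations converts this into a bound on $|\cF|$ for arbitrary families. This is sharp when $n+p$ is even (all full levels), and the half-level cases are handled either by the circle method --- where a ``half level'' $i<n/2$ correctly contributes only $i$ of the $n$ intervals, matching $\binom{n-1}{i-1}$ --- or, for $p\ge q$, by observing that a strongly $(p,q)$-chain intersecting family is $p$-complementing-chain-pair-free (the complements of a $p$-chain contain a $q$-chain whose bottom is disjoint from the original top), so Lemma \ref{begelem} applies directly. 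Your setup of $\cH$ and $\cG$ as cross-intersecting families and your computation of the level-union optimum are correct, but without a weighted/averaging mechanism of this kind the argument cannot be completed.
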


Our first result proves this conjecture for sufficiently large $n$. Note that in this case $|\cR_1|>|\cR_2|$.

\begin{thm}\label{thm1}
Let $p,q$ be positive integers and $n$ be an integer sufficiently large. Then Conjecture \ref{conj1} holds.
\end{thm}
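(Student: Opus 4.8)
The plan is to recast the hypothesis as a cross-intersection between two derived families. Write $\cP$ for the collection of $A\in\cF$ that occur as the top of a $p$-chain inside $\cF$ (that is, $A_1\subsetneq\cdots\subsetneq A_p=A$ with all $A_i\in\cF$), and $\cQ$ for the collection of $B\in\cF$ that occur as the bottom of a $q$-chain inside $\cF$. Then $\cF$ is strongly $(p,q)$-chain intersecting if and only if $\cP$ and $\cQ$ are \emph{cross-intersecting}: every member of $\cP$ meets every member of $\cQ$. Assigning to each $A\in\cF$ its rank $r(A)$, the maximal length of a chain of $\cF$ with top $A$, one sees that $r$ strictly increases along chains, so every level set $\{A:r(A)=j\}$ is an antichain; hence $\cF\setminus\cP=\{A:r(A)\le p-1\}$ is $(p-1)$-Sperner, and symmetrically (ranking from above) $\cF\setminus\cQ$ is $(q-1)$-Sperner. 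In particular $|\cF\setminus\cP|\le M_{p-1}$, where $M_{p-1}$ denotes the size of the middle $p-1$ levels. If $\cQ=\emptyset$ then $\cF=\cF\setminus\cQ$ is $(q-1)$-Sperner, so $|\cF|\le M_{q-1}=|\cR_2|\le|\cR_1|$ for large $n$ and we are done; so assume $\cQ\neq\emptyset$.

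The engine is one cross-intersection inequality. For a fixed $B\in\cQ$, any $A\in\cF$ disjoint from $B$ cannot lie in $\cP$, so the members of $\cF$ disjoint from $B$ form a $(p-1)$-Sperner family on the ground set $[n]\setminus B$. Summing over $B\in\cQ$: a set $A$ is disjoint from some $B\in\cQ$ exactly when $A^c$ lies in the up-closure $\cQ^{\uparrow}$, so $\cP\subseteq\{A:A^c\notin\cQ^{\uparrow}\}$ and, since complementation is a bijection, $|\cP|\le 2^n-|\cQ^{\uparrow}|$. Combining with the Sperner bound, $|\cF|=|\cP|+|\cF\setminus\cP|\le 2^n-|\cQ^{\uparrow}|+M_{p-1}$. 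It therefore suffices to prove $|\cQ^{\uparrow}|\ge 2^n-|\cR_1|+M_{p-1}$. Writing $t=\lfloor (n-p)/2\rfloor$, so that $\cR_1$ consists of the sets of size $>t$ (up to the half-level when $n+p$ is odd) and $2^n-|\cR_1|=\sum_{i\le t}\binom ni$, a direct binomial identity gives $2^n-2\sum_{i\le t}\binom ni=M_{p-1}$ when $n+p$ is even. Thus the required bound collapses neatly to $|\cQ^{\uparrow}|\ge|\cR_1|$; when $n+p$ is odd it changes by a single boundary level, which is exactly the discrepancy left open in Proposition \ref{prop1}.

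The crux, and the step I expect to be the main obstacle, is therefore the lower bound $|\cQ^{\uparrow}|\ge|\cR_1|$: the up-closure of the ``low part'' $\cQ$ of $\cF$ must be at least as large as the entire upper block $\cR_1$. Equivalently, by the contrapositive of the engine, whenever $|\cF|>|\cR_1|$ one must exhibit $A\in\cP$ and $B\in\cQ$ with $A\cap B=\emptyset$. I would establish this by a dichotomy on how deep $\cF$ reaches. Since $\cQ$ is downward closed inside $\cF$ (appending the $q$-chain above $B$ to any $B'\subsetneq B$ in $\cF$ shows $B'\in\cQ$) and, for large $n$, any set of size $\le t$ has $n-t=(n+p)/2\ge q$ levels of room above it, the genuinely low members of $\cF$ feed into $\cQ$ and their up-closures inflate $\cQ^{\uparrow}$ toward $\cR_1$; conversely, if $\cF$ has essentially no sets of size $\le t$ then $\cF$ is already contained in $\cR_1$ and the bound is immediate.

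Making this quantitative is the delicate point: one must rule out that the deficiency $2^n-|\cF|$ hides entirely among the subsets of a single complementary subcube $2^{[n]\setminus B}$ (which would keep $\cQ^{\uparrow}$ small), and one must control the boundary levels when $n+p$ is odd. This is precisely where ``sufficiently large $n$'' enters. The plan is to use estimates on ratios of binomial coefficients near the middle, so that the exponential gap $|\cR_1|-|\cR_2|$ dominates the lower-order Sperner error terms $M_{p-1},M_{q-1}$ and so that the half-level parity correction is absorbed; this yields the conjectured bound $|\cF|\le|\cR_1|=\max\{|\cR_1|,|\cR_2|\}$ uniformly for all $p,q$ once $n$ is large, removing the $p<q$, $n+p$ odd restriction of Proposition \ref{prop1}.
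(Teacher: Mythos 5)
Your reduction is set up correctly as far as it goes: $\cF\setminus\cP$ and $\cF\setminus\cQ$ are indeed $(p-1)$- and $(q-1)$-Sperner, and the engine $|\cP|\le 2^n-|\cQ^{\uparrow}|$ is valid. But the argument has a genuine gap at precisely the point you flag, and it is not a gap that binomial estimates can fill. The crux claim, that $|\cQ^{\uparrow}|\ge|\cR_1|$ whenever $|\cF|>|\cR_1|$, is never proved, and it is not a smaller sub-problem: since the theorem asserts that no strongly $(p,q)$-chain intersecting family with $|\cF|>|\cR_1|$ exists at all, your claim is (vacuously) equivalent to the theorem itself, so nothing has been reduced. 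The heuristic you offer for it is also flawed: a low member of $\cF$ need not lie in $\cQ$, because membership in $\cQ$ requires a $q$-chain above it \emph{inside} $\cF$, not merely $(n+p)/2\ge q$ levels of room in the Boolean lattice; such sets could all be maximal in $\cF$. The only control available is the one you already have, $|\cF\setminus\cQ|\le M_{q-1}$, and running the engine with it (for $n+p$ even) gives $|\cQ^{\uparrow}|\ge|\cQ|\ge|\cF|-M_{q-1}$, hence $2|\cF|\le 2^n+M_{p-1}+M_{q-1}$, i.e.\ $|\cF|\le|\cR_1|+M_{q-1}/2$, using $2|\cR_1|=2^n+M_{p-1}$. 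The deficit $M_{q-1}/2=\Theta(2^n/\sqrt{n})$ is of the same order as $M_{p-1}/2=|\cR_1|-2^{n-1}$, the very quantity that separates the conjectured answer from the trivial bound $2^{n-1}$; it is an additive error in the upper bound and is never compared against the exponential gap $|\cR_1|-|\cR_2|$ that you invoke, so ``sufficiently large $n$'' buys nothing here. To close it you would need $\cQ^{\uparrow}$ to gain roughly $M_{q-1}$ over $\cQ$, and your proposal contains no mechanism producing such a gain. Note also that the case your identity handles cleanly, $n+p$ even, is already settled by Proposition \ref{prop1}; the genuinely open case is $n+p$ odd with $p<q$, where $\cR_1$ contains a half-level, and this is exactly the bookkeeping you defer.

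For comparison, the paper's proof works on a different axis and has an explicit exponential saving, which is what makes ``large $n$'' effective. It proves a weighted statement on the circle (Lemma \ref{lemm1}) via the permutation method: if $\cF$ is $p$-complementing-chain-pair-free, Lemma \ref{begelem} applies; otherwise $\cF$ contains a $p$-chain together with its complements, and then \emph{every} member of $\cF$ in the chain-pair through that configuration is confined to two chains of length at most $q-1$, so one of the $n$ rotated chain-pairs carries weight at most $y$ (the middle $q-1$ levels) instead of $x$ (the upper $(n+p)/2$ levels). The resulting saving $(x-y)/2=\Theta(2^n)$ dominates the $\Theta(2^n/\sqrt{n})$ parity correction $(p-1)\binom{n}{(n-p+1)/2}/2$, which is exactly the comparison your approach is missing.
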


Let us continue with totally $(p, q)$-chain-intersecting families $\cF$. Observe that if $p=1$, then Proposition \ref{prop1} implies that $|\cF|\le 2^{n-1}$, which is sharp as shown by any intersecting family of that cardinality. Here we deal with the first open case.

\begin{thm}\label{thm2}
The largest totally $(2,2)$-chain intersecting family has cardinality $2^{n-1}$.
\end{thm}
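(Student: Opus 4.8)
The lower bound is immediate: the family of all subsets of $[n]$ containing a fixed element is intersecting and has size $2^{n-1}$, and \emph{any} intersecting family is automatically totally $(p,q)$-chain intersecting, since the bottom sets $A_1,B_1$ of the two chains are themselves members of the family and hence meet. So the whole content is the upper bound $|\cF|\le 2^{n-1}$, and the plan is to prove it in three steps.

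\textbf{Structural reformulation.} Call $A\in\cF$ \emph{non-maximal} if some member of $\cF$ properly contains it, and let $\cB\subseteq\cF$ denote the set of non-maximal members. I claim that $\cF$ is totally $(2,2)$-chain intersecting if and only if $\cB$ is intersecting. Indeed, the bottom of any $2$-chain in $\cF$ is non-maximal, and conversely every non-maximal set is the bottom of some $2$-chain; hence a forbidden configuration is exactly a pair of non-maximal sets $A_1,B_1$ with $A_1\cap B_1=\emptyset$. (In particular $\emptyset\notin\cB$.) From now on I assume $\cB$ is intersecting. Note that the naive bounds $|\cB|\le 2^{n-1}$ and ``$\cF\setminus\cB$ is an antichain'' do not combine to $2^{n-1}$, so the maximality/non-maximality interaction must be used.

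\textbf{Reduction to complement pairs.} Partition $2^{[n]}$ into the $2^{n-1}$ pairs $\{S,\bar S\}$, and call such a pair \emph{full} if $S,\bar S\in\cF$ and \emph{empty} if $S,\bar S\notin\cF$; let $t_2$ and $t_0$ be their numbers. Counting members of $\cF$ by pairs gives $|\cF|=2^{n-1}+t_2-t_0$, so $|\cF|\le 2^{n-1}$ is \emph{equivalent} to $t_2\le t_0$. The structure enters through one observation: if $\{S,\bar S\}$ is full, then $S$ and $\bar S$ are disjoint members of $\cF$, so they cannot both lie in the intersecting family $\cB$; hence at least one of them is maximal in $\cF$.

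\textbf{An injection from full pairs to empty pairs.} Given a full pair, choose a maximal side $M\in\cF$ (so $\bar M\in\cF$ as well). Since $M$ is maximal, every proper superset of $M$ is outside $\cF$, equivalently $\bar W\notin\cF$ for every $W\subsetneq\bar M$. Thus \emph{any} $W\subsetneq\bar M$ with $W\notin\cF$ produces an empty pair $\{W,\bar W\}$ lying below the given full pair, and such a $W$ exists because the subsets of $\bar M$ that lie in $\cF$ all sit below $\bar M\in\cF$, hence lie in $\cB$, hence form an intersecting family on ground set $\bar M$, so at most $2^{|\bar M|-1}$ of the $2^{|\bar M|}$ subsets of $\bar M$ belong to $\cF$. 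To upgrade these local choices into an injection $\{\text{full pairs}\}\hookrightarrow\{\text{empty pairs}\}$ I would apply Hall's theorem to the bipartite graph joining each maximal side $M$ to the empty pairs $\{W,\bar W\}$ with $W\subsetneq\bar M$ and $W\notin\cF$.

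\textbf{The main obstacle} is precisely the verification of Hall's condition: distinct full pairs must receive distinct empty pairs, yet a priori many maximal sides could compete for the same few low empty pairs, the extreme case being $W=\emptyset$ (the pair $\{\emptyset,[n]\}$) when the complements $\bar M$ are small. This is exactly where the intersecting hypothesis on $\cB$ must be used decisively: small sets of the form $\bar M$ that lie in $\cF$ are non-maximal and therefore belong to $\cB$, so they are forced to meet pairwise and cannot be numerous or pairwise disjoint. Quantitatively, for any subfamily of maximal sides whose complements form an antichain $\{\bar M\}$, I expect to bound the proper subsets of these $\bar M$ that avoid $\cF$ from below by the size of the subfamily, combining the antichain property with the intersecting estimate $|\{B\in\cB:B\subseteq\bar M\}|\le 2^{|\bar M|-1}$. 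Establishing this counting inequality cleanly is the crux; once Hall's condition is confirmed the resulting matching yields $t_2\le t_0$, and hence $|\cF|\le 2^{n-1}$, matching the construction.
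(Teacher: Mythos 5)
Your reductions are correct as far as they go: the equivalence between the totally $(2,2)$-chain intersecting property and ``the family $\cB$ of non-maximal members is intersecting (with $\emptyset\notin\cB$)'', the identity $|\cF|=2^{n-1}+t_2-t_0$, and the fact that each full pair $\{S,\overline{S}\}$ has at least one maximal side are all right, and your per-pair count does produce, for each full pair, at least one empty pair below it. But the proof is not complete, and the missing step is not a routine verification --- it is where the entire content of the theorem sits. What you need for Hall's condition is that \emph{every} collection of $k$ full pairs sees at least $k$ distinct empty pairs, and your estimate $|\{B\in\cB: B\subseteq \overline{M}\}|\le 2^{|\overline{M}|-1}$ is a bound inside a single $\overline{M}$; it says nothing about how the neighborhoods of different full pairs overlap. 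That overlap is the danger: if several complements $\overline{M_1},\dots,\overline{M_k}$ are small and share most of their elements (say $\overline{M_i}=\{a,b,c_i\}$ with $\{a\},\{a,b\},\{a,c_i\}$ all in $\cF$), the available $W$'s for different pairs largely coincide, and excluding a Hall violation forces you to exploit the intersecting property of $\cB$ \emph{across} different $\overline{M_i}$'s simultaneously, not just within one of them. You acknowledge this yourself (``establishing this counting inequality cleanly is the crux''), but you neither formulate nor prove the lemma that would do it; as written you have proved the lower bound and the reduction to $t_2\le t_0$, not the upper bound.

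It is worth seeing how the paper sidesteps exactly this battle. It isolates the subfamily $\cG$ of members that are simultaneously maximal and minimal in $\cF$, keeps one set from each complementary pair inside $\cG$ (call the kept part $\cG_1$, the rest $\cG_2$), and sets $\cF_1=\cF\setminus\cG_2$. Then $\cF_1$ is complement-free --- if $F,\overline{F}\in\cF_1$ and neither is isolated, each lies in a $2$-chain, and the bottoms of those chains are disjoint subsets of $F$ and $\overline{F}$, which is your forbidden configuration --- while $\cG_2$ is complement-free and cross-Sperner with $\cF_1$, so Hilton's theorem (Theorem \ref{hilt}) immediately gives $|\cF|=|\cF_1|+|\cG_2|\le 2^{n-1}$. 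In other words, the global ``pairs versus pairs'' counting statement you are trying to establish by a bare-hands matching argument is exactly the kind of statement Hilton's theorem packages; the realistic ways to finish your write-up are either to prove your Hall-type lemma in full (expect this to require an argument of comparable substance to Hilton's theorem itself) or to reorganize your full/empty-pair bookkeeping so that it terminates in an application of Theorem \ref{hilt}.
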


Finally, we state a conjecture regarding totally $(p, q)$-chain-intersecting families.  Let $\cF_q(i)=\{F\subset 2^{[n]}: 1\in F, |F|\le i\} \cup \{F\subset 2^{[n]}: i-q+1 \le |F|\le i\}$. In this family every chain of length $q$ contains a set of size at most $i-q$, thus all the members of such a chain contain $1$. This implies $\cF_q(i)$ is totally $(p,q)$-intersecting for every $i$, if $p\ge q$.

\begin{conjecture}\label{conj2} If $\cF$ is a totally  $(p,q)$-chain intersecting family and $p\ge q$, then $$|\cF|\le \max\{ |\cF_q(i)|,|\cR|\},$$ where $1\le i\le n$ and $\cR$ denotes the middle $p-1$ levels.
\end{conjecture}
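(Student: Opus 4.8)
The plan is to recast the hypothesis in terms of two cross-intersecting families of ``chain bottoms'' and then split along the two families appearing in the maximum. For a totally $(p,q)$-chain intersecting $\cF$, let $\cA$ be the set of all $A\in\cF$ that are the smallest member of some $p$-chain in $\cF$, and let $\cB$ be the set of all $B\in\cF$ that are the smallest member of some $q$-chain in $\cF$. Unwinding the definition, $\cF$ is totally $(p,q)$-chain intersecting if and only if $\cA$ and $\cB$ are cross-intersecting, that is $A\cap B\neq\emptyset$ for all $A\in\cA$, $B\in\cB$. Since $p\ge q$, the first $q$ sets of any $p$-chain form a $q$-chain, so $\cA\subseteq\cB$, and in particular $\cA$ is intersecting. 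Now if $\cF$ contains no $p$-chain, then $\cA=\emptyset$, so $\cF$ has no chain of length $p$, i.e. $\cF$ is $(p-1)$-Sperner, and Erd\H os's theorem \cite{Er} gives $|\cF|\le|\cR|$, the second term in the maximum. The whole substance is therefore the complementary case $\cA\neq\emptyset$, where I aim to prove $|\cF|\le|\cF_q(i)|$ for a suitable $i$.

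So assume $\cF$ contains a $p$-chain. The basic decomposition is $\cF=\cB\cup(\cF\setminus\cB)$. The part $\cF\setminus\cB$ contains no $q$-chain, since the bottom of such a chain would lie in $\cB$; hence $\cF\setminus\cB$ is $(q-1)$-Sperner and should be matched against the top $q-1$ levels of $\cF_q(i)$. The part $\cB$ consists of sets each meeting every member of the nonempty intersecting family $\cA$, and this is the part that must be forced into a star. The plan is to invoke a Hilton--Milner/Frankl-type stability principle: a nonempty intersecting family $\cA$ that is not contained in a single star is so small that the family of all sets meeting every member of $\cA$ cannot compete with the star case. Consequently, after reducing to an extremal $\cF$, one should be able to assume that $\cB$, and with it the entire ``long-chain part'' of $\cF$, lies in the star $\{F:1\in F\}$. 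Comparing the resulting family level by level, $\{F:1\in F,\ |F|\le i\}$ below and the $(q-1)$-Sperner cap above, and optimising over the single free parameter $i$ (the top level), should reproduce $|\cF_q(i)|$; together with Case~1 this yields the claimed maximum.

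The main obstacle is that $\cA$ and $\cB$ are \emph{not} free parameters but are determined by the chain structure of $\cF$: any local modification, such as adding a set or applying an $(i,j)$-shift, can lengthen or shorten chains and thereby move sets into or out of $\cA$ and $\cB$. This is precisely why the usual compression arguments, which reduce intersecting problems to stars almost for free, are delicate here, since a standard shift need not preserve ``totally $(p,q)$-chain intersecting''. I expect the hardest step to be converting the soft ``a star is best'' heuristic into the exact count: one must simultaneously control the star part $\cB$, the Sperner cap $\cF\setminus\cB$, and the chains that cross between them (a cap set sitting atop a chain rooted in the star), so that the two parts align at a common top level $i$. A promising but technical alternative is induction on $n$ by splitting $\cF$ at the largest element: the sub-family of sets avoiding $n$ inherits the totally $(p,q)$-chain intersecting property on $[n-1]$ and is handled by induction, whereas the traces of sets containing $n$ are far less constrained; the mixed chains linking the two sub-families are exactly where the real work, and the likely difficulty, resides.
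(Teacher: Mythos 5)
First, a point of order: this statement is Conjecture~\ref{conj2} of the paper, which the paper explicitly leaves \emph{open} --- there is no proof to compare against, so any complete argument here would be a new result, not a reconstruction. Your attempt is not that complete argument. The reformulation at the start is sound: with $\cA$ the bottoms of $p$-chains and $\cB$ the bottoms of $q$-chains, the property is exactly cross-intersection of $\cA$ and $\cB$, and $p\ge q$ gives $\cA\subseteq\cB$; the case $\cA=\emptyset$ correctly reduces to Erd\H{o}s's $(p-1)$-Sperner bound and yields the $|\cR|$ term. But the entire case $\cA\neq\emptyset$ --- which is the whole content of the conjecture --- rests on an unproven ``Hilton--Milner/Frankl-type stability principle'' that you state only heuristically. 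As formulated it is not a theorem you can cite: those stability results concern uniform intersecting families, whereas your $\cA$ is non-uniform, $\cB$ is only cross-intersecting with $\cA$ (not itself intersecting), and, as you yourself note, $\cA$ and $\cB$ are determined by the chain structure of $\cF$, so the compressions that would normally force a star do not preserve the totally $(p,q)$-chain intersecting property. Conceding ``this is precisely why the usual compression arguments are delicate here'' identifies the obstruction but does not overcome it.

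There is also a quantitative gap even if you grant the star reduction. Bounding the two parts separately gives $|\cB|+|\cF\setminus\cB|\le 2^{n-1}+(q-1)\binom{n}{\lfloor n/2\rfloor}$ (star plus $(q-1)$-Sperner), and this \emph{exceeds} $\max_i|\cF_q(i)|$: in $\cF_q(i)$ the star part is truncated at level $i$ exactly where the full levels sit, so the conjectured extremum is strictly smaller than the naive sum. Hence the step you call ``optimising over the single free parameter $i$'' is not an optimisation at all but the actual theorem to be proved: one must show that a large star-like $\cB$ forces the $(q-1)$-Sperner cap $\cF\setminus\cB$ to live only at the top of the truncated star, and conversely. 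Your proposal supplies no mechanism for this trade-off, and the alternative induction on $n$ is likewise only gestured at. One smaller remark in your favour: your matching of $\cF\setminus\cB$ against $q-1$ full levels implicitly corrects what appears to be an off-by-one in the paper's definition of $\cF_q(i)$ (as literally written, with $q$ full levels $i-q+1\le|F|\le i$, a $q$-chain can sit entirely inside the full levels with a bottom avoiding the element $1$, contradicting the paper's justification that such chains reach size at most $i-q$); but noticing this does not close the argument. In summary: the easy half is correct, the hard half is a program with its two essential steps --- the stability/star reduction and the level-alignment count --- both missing.
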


\section{Preliminaries}\label{two}

Let us start with introducing the so-called \textit{permutation method}. Given and a permutation $\alpha$ on $[n]$ and a set $G\subset [n]$, we let $\alpha(G)=\{\alpha(x): \,x\in G\}$. Given a family $\cG$, we let $\alpha(\cG)=\{\alpha(G):G\in \cG\}$. Let $g_i$ denote the number of sets of size $i$ in $\cG$. Let us consider the following double sum.

\[\sum_{\alpha}\sum_{F\in \cF\cap \alpha(\cG)}\frac{1}{g_{|F|}}\binom{n}{|F|}=\sum_{F\in \cF}\sum_{\alpha:\,F\in \alpha(\cG)}\frac{1}{g_{|F|}}\binom{n}{|F|},\]

where $\alpha$ runs through every permutation of $[n]$. For every $F$ there are $|F|!(n-|F|)!$ permutations that map a given $|F|$-element set to $F$, thus there are $|F|!(n-|F|)!g_{|F|}$ permutations that map a member of $\cG$ to $F$. Therefore, the right hand side of the above equation is $|\cF|n!$. If we give an upper bound on $\sum_{F\in \cF\cap \alpha(\cG)}\frac{1}{g_{|F|}}$ for every $\alpha$, then we obtain an upper bound on $|\cF|$.

This method works for every family $\cG$, but few families give sharp upper bounds. For $k$-Sperner families a \textit{full chain}, i.e., a chain of maximal length $n+1$ can be used to obtain the theorem of Erd\H os \cite{Er}. The first application of the permutation method was the case $k=1$ in \cite{lubell}. We note that if $\frac{1}{g_{|F|}}\binom{n}{|F|}$ is replaced by a different weight function $w(|F|)$, then we obtain a bound $\sum_{F\in \cF}w'(|F|)$ for some $w'$. In particular, \cite{lubell} proved a stronger result than Sperner's theorem. Here we will always use this weight function, and we will simply say \textit{weight of a family $\cF$} instead of $\sum_{F\in\cF}\frac{1}{g_{|F|}}\binom{n}{|F|}$.

We will consider two other families as $\cG$, just like in \cite{BG}. The \textit{chain-pair} consists of a full chain $\cC$ and the chain consisting of the complements of the members of $\cC$. We denote the complement of a set $A$ by $\overline{A}$.

The other family is a bit more complicated. We consider $[n]$ cyclically, i.e., $1$ comes after $n$. An \textit{interval} consists of consecutive elements. An interval $[i,j]$ is of form $\{i,i+1,\dots,i+j\}$, where addition is modulo $n$. Note that $\emptyset$ and $[n]$ are also intervals. The permutation method with $\cG$ being the family of intervals is called the \textit{circle method} and was introduced in \cite{kat}. When we talk about a subfamily of the intervals, we simply say that the family is on the circle.

Let us describe how the permutation method was used in \cite{BG}. First we need to bound $\sum_{F\in \cF\cap \alpha(\cG)}\frac{1}{g_{|F|}}\binom{n}{|F|}$ on the chain-pair or the circle. In each case, the bound can be easily found on the chain-pair. In particular, for strongly $(p,q)$-chain intersecting families, i.e., in the setting of Theorem \ref{bege}, the upper $(n+p+q-1)/2$ levels have the maximum on the chain-pair. This gives a sharp bound if $n+p+q-1$
 is even. However, if $n+p+q-1$ is odd, then we have a set on level $(n-p-q)/2$. On the chain-pair, this gives the weight $\binom{n}{(n-p-q)/2}/2$, thus the upper bound is the cardinality of the top $(n+p+q-2)/2$ levels plus $\binom{n}{(n-p-q)/2}/2$. On the other hand, the conjectured bound is the cardinality of the top $(n+p+q-2)/2$ levels plus $\binom{n-1}{(n-p-q-2)/2}$. In this case the circle helps, where again the top $(n+p+q-1)/2$ levels have the maximum weight.

More generally, if the optimal family on the chain-pair contains both sets from a level $i$, they give the weight $\binom{n}{i}$, which corresponds to the cardinality of the full level $i$. If the  optimal family on the chain-pair contains one set from level $i<n/2$ (the one containing our fixed element), this corresponds to half that cardinality, which is larger than the cardinality of an intersecting family on that level. Analogously, if $i>n/2$, we may look for the complements of the members of an intersecting family. Therefore, the chain-pair may give a sharp result if we have full level everywhere, except maybe level $n/2$. On the other hand, on the circle the half level$i$ where $i<n/2$ consists of $i$ sets, which gives a sharp bound in these cases.

One can see that the weighted analogue of Conjecture \ref{conj1} holds on the chain-pair (we will show it in the proof of Lemma \ref{lemm1}). This gives most of the results mentioned in Proposition \ref{prop1}. In fact this gives a bit more: if $n+p$ is odd and the middle $q$ levels are optimal on the chain-pair for the weighted problem, i.e., the middle $q-1$ levels are larger than the upper $(n+p-1)/2$ levels plus $\binom{n}{n-p+2}/2$, then again we obtain a sharp bound. 

\smallskip

We will use a theorem of Hilton \cite{hilton}. We say that two families $\cF$ and $\cF'$ are \textit{cross-Sperner} if there are no members $F\in \cF$ and $F'\in\cF'$ with $F\subset F'$ or $F'\subset F$.

\begin{thm}[Hilton \cite{hilton}]\label{hilt}
    If $\cF$ and $\cF'$ are complement-free cross-Sperner families, then $|\cF|+|\cF'|\le 2^{n-1}$.
\end{thm}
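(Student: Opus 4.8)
The plan is to recast the statement as a counting inequality on complementary pairs; the weighted permutation/chain machinery of Section~\ref{two} is the wrong tool here, since the extremal families contain chains of length close to $n$ and so are very far from being Sperner, which makes any LYM-type cardinality bound far too weak.

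Write $\cA=\cF$ and $\cB=\cF'$, and read ``complement-free'' as the condition that neither family contains a set together with its complement. The cross-Sperner condition in particular forbids a common member (take $F=F'$), so $\cA\cap\cB=\emptyset$ and $|\cF|+|\cF'|=|\cA\cup\cB|$. I would then partition $2^{[n]}$ into the $2^{n-1}$ complementary pairs $\{X,\overline{X}\}$ (here $X\neq\overline{X}$ for every $X$). Because each family is complement-free, no pair has both of its elements in $\cA$ or both in $\cB$; hence any pair having both elements in $\cA\cup\cB$ must have one element in $\cA$ and the other in $\cB$. Call such a pair \emph{full}, call a pair with neither element in $\cA\cup\cB$ \emph{empty}, and let $f,h,e$ count the pairs with $2,1,0$ elements in $\cA\cup\cB$. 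Then $f+h+e=2^{n-1}$ and $|\cA\cup\cB|=2f+h=2^{n-1}+(f-e)$. \textbf{So the theorem is equivalent to the inequality $f\le e$}, and this is the form I would attack.

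The cross-Sperner hypothesis enters through a ``cone'' observation. If $X\in\cA$, then no superset or subset of $X$ can lie in $\cB$, since such a set would be cross-comparable to $X$; symmetrically, if $\overline{X}\in\cB$ then no set comparable to $\overline{X}$ lies in $\cA$. Taking complements and combining these, for a full pair with $X\in\cA$ and $\overline{X}\in\cB$ every set $Z$ comparable to $X$ satisfies $Z\notin\cB$ and $\overline{Z}\notin\cA$: the two principal cones $\{Z:Z\subseteq X\}$ and $\{Z:Z\supseteq X\}$ are entirely free of $\cB$, and the complementary cones are free of $\cA$. Morally, each full pair is surrounded by room that empty pairs can be charged to. The model is the extremal family $\cA=\{A:1\in A,\,2\notin A\}$, $\cB=\{B:2\in B,\,1\notin B\}$, where $f=e=2^{n-2}$ and $X\mapsto X\cup\{2\}$ realizes an explicit bijection from full pairs to empty pairs, moving $X$ out of both cones by adjusting the distinguished coordinate $2\in\overline{X}$.

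To establish $f\le e$ for arbitrary $\cA,\cB$ I would run a \emph{compression} argument. Fix a coordinate and a complement-symmetric down-shift $\Phi$ that acts on $\cA$ and on $\cB$, replacing each set and its complement in tandem; the target is an operator satisfying (i) $|\Phi(\cA)|=|\cA|$ and $|\Phi(\cB)|=|\cB|$, (ii) preservation of complement-freeness of each family, (iii) preservation of the cross-Sperner property between them, and (iv) progress toward a coordinate-defined canonical pair of the model type, on which $f\le e$ is immediate via the bijection above. Iterating over all coordinates then yields the bound. \textbf{The main obstacle is the design and verification of $\Phi$}: an ordinary down-shift preserves cross-Sperner and the cardinalities, but can place a set together with its complement inside one family, so the shift must be applied symmetrically to $\{X,\overline{X}\}$, and the delicate point is to check that this symmetric move never creates a new cross-comparability between $\cA$ and $\cB$. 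A direct alternative would be to build the injection from full pairs to empty pairs by hand, pushing $X$ out of both cones greedily along coordinates of $\overline{X}$; there the difficulty is precisely to make the greedy choice canonical enough to be injective, which is why I expect the complement-symmetric compression to give the cleaner route.
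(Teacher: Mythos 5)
First, a point of comparison: the paper does not prove Theorem~\ref{hilt} at all --- it is quoted from Hilton \cite{hilton} and used as a black box in the proof of Theorem~\ref{thm2} --- so your attempt can only be judged as a standalone proof, and as such it is incomplete. Your reduction is correct and genuinely nice: partitioning $2^{[n]}$ into the $2^{n-1}$ complementary pairs, using complement-freeness to see that a ``full'' pair must split one element to $\cA$ and one to $\cB$, and computing $|\cA\cup\cB|=2^{n-1}+(f-e)$, so that the theorem is exactly $f\le e$; your extremal model $\cA=\{A: 1\in A,\ 2\notin A\}$, $\cB=\{B: 2\in B,\ 1\notin B\}$ and the bijection $X\mapsto X\cup\{2\}$ also check out. (Your reading of cross-Sperner as forbidding $F=F'$ is the right one; with strict containment the statement fails already for $n=3$ with $\cF=\cF'$ the full middle level.) But everything after the reduction is a plan, not a proof: the operator $\Phi$ is never defined, none of your conditions (i)--(iv) is verified, and $f\le e$ is never established for general families.

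Worse, the central premise of the plan is false: an ordinary shift does \emph{not} preserve the cross-Sperner property. Take $\cA=\{\{1,2\}\}$ and $\cB=\{\{1,3\}\}$, which are cross-Sperner; the deletion shift on coordinate $3$ sends $\cB$ to $\{\{1\}\}$ and creates the containment $\{1\}\subsetneq\{1,2\}$, and the replacement shift sending $3$ to $2$ turns $\{1,3\}$ into $\{1,2\}\in\cA$. Compression is well adapted to intersection-type conditions but generically destroys containment-avoidance conditions --- this is precisely why shifting is rarely used for Sperner-type theorems --- so condition (iii) fails for the natural candidates, and you exhibit no operator for which it holds. Condition (iv) hides a second gap of the same size: even with a working complement-symmetric shift, iterated compression would converge to some shifted pair, not necessarily your two-coordinate model, so $f\le e$ would still need to be proved on the limit families. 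The alternative greedy injection is likewise only gestured at, with the injectivity problem explicitly left open. So the proposal contains a correct and potentially useful reformulation, but the actual proof of Hilton's inequality is missing, and the proposed mechanism for supplying it does not work as described.
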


\section{Proofs}

We say that a family $\cF$ is \textit{$r$-complementing-chain-pair-free} if it does not contain sets $F_1\subsetneq F_2\subsetneq\dots\subsetneq F_r$ together with their complements. 
Clearly a $(p,q)$-chain intersecting family is $(p+q-1)$-complementing-chain-pair-free. Theorem \ref{bege} uses this observation, more precisely the following lemma.

\begin{lemma}[Bern\'ath and Gerbner \cite{BG}]\label{begelem}
The largest weight of an $r$-complementing-chain-pair-free family on the circle is equal to the weight of the upper $(n+r)/2$ levels on the circle.
\end{lemma}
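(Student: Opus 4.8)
The plan is to strip off the complementation structure and reduce to an Erd\H os-type statement about intervals. For a family $\cF$ of intervals write $\overline{\cF}=\{\overline I: I\in\cF\}$ and let $\cF^{*}=\cF\cap\overline{\cF}$ be its \emph{symmetric part}. Since the complement of an interval is an interval and $\binom{n}{\ell}=\binom{n}{n-\ell}$, complementation is a weight-preserving involution on the intervals. The first point is that $\cF$ is $r$-complementing-chain-pair-free \emph{iff} $\cF^{*}$ contains no chain $F_1\subsetneq\dots\subsetneq F_r$: a forbidden configuration is exactly $r$ nested intervals all of whose members and complements lie in $\cF$, i.e.\ an $r$-chain inside $\cF^{*}$. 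Thus the condition constrains $\cF^{*}$ only, while the intervals in $\cF\setminus\cF^{*}$ are unconstrained.

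I would first dispose of the unconstrained part. If neither member of a complementary pair $\{I,\overline I\}$ lies in $\cF$, adding $I$ raises the weight and does not change $\cF^{*}$, so we may assume every pair meets $\cF$. Writing $w(\cdot)$ for the weight and grouping by complementary pairs then yields
\[
w(\cF)=2^{\,n-1}+\tfrac12\,w(\cF^{*}),
\]
since the pairs together carry weight $2^{n-1}$ (one free member each) and a pair contributes a second time exactly when it is full, i.e.\ when both members lie in $\cF^{*}$. Hence the lemma becomes: a symmetric interval family with no $r$-chain has weight at most that of the middle $r-1$ levels, with equality matching the upper $(n+r)/2$ levels through the identity above.

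For the upper bound I would use the symmetric chain decomposition of the intervals into the $n$ ``grow-to-the-right'' chains $C_s=\{ \{s\}, \{s,s+1\}, \dots, \{s,s+1,\dots,s+n-2\} \}$, indexed by the left endpoint $s$ (with one chain extended by $\emptyset$ and $[n]$ to absorb those two sets). These partition the intervals, and each $C_s$ meets every level $1,\dots,n-1$ once, so it is symmetric about $n/2$. As $\cF^{*}$ has no $r$-chain, $|\cF^{*}\cap C_s|\le r-1$, and the $r-1$ heaviest members of $C_s$ are those of the middle $r-1$ sizes; summing over $s$ gives $w(\cF^{*})\le\sum_{\text{middle }r-1\text{ sizes}}\binom{n}{\ell}$, the weight of the middle $r-1$ levels. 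Together with a check that the upper $(n+r)/2$ levels is one-member-per-pair plus the middle $r-1$ full levels as its symmetric part, this settles the case where $n+r$ is even --- precisely the case in which a symmetric range of $r-1$ full levels exists.

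The main obstacle is the case $n+r$ odd. Here there is \emph{no} symmetric family consisting of $r-1$ full levels, so the crude per-chain bound overshoots the truth by a half-level, as already happens for $n=3,\,r=2$; one must keep the symmetry of $\cF^{*}$ rather than discard it. The plan is to exploit the complementation $\overline{C_s}=D_{s-1}$, where $D_{s-1}$ is the chain of intervals with right endpoint $s-1$ and is incomparable to $C_s$ (no interval containing $s$ but not $s-1$ is nested with one containing $s-1$ but not $s$). This couples the boundary choices in complementary chains: tracking the two boundary sizes $m=(n-r+1)/2$ and $n-m$, a nested chain cannot contain both a size-$m$ interval through a fixed point and its complementary size-$(n-m)$ interval avoiding that point, so only ``half'' of the boundary level can survive in a symmetric family --- exactly the $\tfrac12\binom{n}{m}$ saving represented by the half-level in the upper $(n+r)/2$ levels. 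Making this bookkeeping precise is the crux; if a direct count proves delicate I would instead induct on $r$ in steps of two, peeling the extreme sizes $m$ and $n-m$ to reduce to $r-2$ while preserving both symmetry and parity. For the matching lower bound I would take the upper $(n+r)/2$ levels itself: its symmetric part is the middle full levels together with (in the odd case) the size-$m$ intervals through a fixed point, which has no $r$-chain for exactly the fixed-point reason above, and whose weight realizes the bound via the displayed identity.
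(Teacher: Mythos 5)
Your reduction is correct as far as it goes, and in the case $n+r$ even it yields a complete proof that is genuinely different from the one in \cite{BG} (which first replaces small sets by larger sets in their chain-pairs to push the minimum size up to $(n-r+1)/2$, and then inducts on $r$): the observation that the hypothesis constrains only the symmetric part $\cF^{*}=\cF\cap\overline{\cF}$, the identity $w(\cF)=2^{n-1}+\frac{1}{2}w(\cF^{*})$ after saturating every complementary pair, and the decomposition of the proper intervals into the $n$ left-endpoint chains $C_s$ are all valid, and the per-chain bound $|\cF^{*}\cap C_s|\le r-1$ finishes that case cleanly.

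The gap is the case $n+r$ odd, which you correctly identify as the crux but do not close, and the structural claim your sketch rests on is false. You assert that in a symmetric $r$-chain-free interval family ``only half of the boundary level can survive.'' Counterexample: take $r=3$ and $n$ even, so $m=n/2-1$, and let $\cF^{*}$ consist of \emph{all} intervals of sizes $n/2-1$ and $n/2+1$. This family is symmetric and contains no $3$-chain (only two sizes occur), yet both boundary levels are fully present; moreover its weight $2\binom{n}{n/2-1}$ exactly equals the extremal value $\binom{n}{n/2}+\frac{n-2}{n}\binom{n}{n/2-1}$, so it is a second extremal configuration. Consequently no cap on the boundary levels in isolation can be proved; what is needed is a trade-off inequality coupling the boundary levels with the middle ones, and that is exactly the bookkeeping you defer. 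Your fallback does not repair this either: after peeling the sizes $m$ and $n-m$ off $\cF^{*}$, the remaining symmetric family need not be $(r-2)$-chain-free (the full middle $r-2$ levels survive peeling and contain $(r-2)$-chains), so the induction hypothesis for $r-2$ cannot be applied to it. This is precisely why the proof in \cite{BG} runs a separate induction on $r$ with base cases after its push-up step; in your formulation, the analogous work for the odd case remains to be done.
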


\begin{lemma}\label{lemm1}
If $n$ is sufficiently large, then the largest weight of a strongly $(p,q)$-chain intersecting family on the circle is equal to the weight of the upper $(n+p)/2$ levels on the circle.
\end{lemma}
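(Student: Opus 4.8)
The plan is to prove matching lower and upper bounds for the weight on the circle. For the lower bound I would exhibit the upper $(n+p)/2$ levels, read as a family of intervals, and check it is strongly $(p,q)$-chain intersecting. The verification is a size count: in a $p$-chain $A_1\subsetneq\dots\subsetneq A_p$ the top satisfies $|A_p|\ge |A_1|+p-1$, so if every member has size at least $(n-p)/2+1$ (the case $n+p$ even) then $|A_p|+|B_1|\ge (n+p)/2+(n-p)/2+1=n+1$, forcing $A_p\cap B_1\neq\emptyset$. When $n+p$ is odd one has to be more careful: the extremal family uses the half level of size $(n-p+1)/2$ consisting of the intervals through a fixed point, and the point is that an interval of size $(n+p-1)/2$ \emph{avoiding} the fixed point cannot top a $p$-chain inside the family, since below it only the full levels of sizes $(n-p+3)/2,\dots,(n+p-1)/2$ are available, and these are only $p-1$ sizes, one short of a $p$-chain. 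Hence the only pairs $A_p,B_1$ with $|A_p|+|B_1|=n$ never both occur, and the family is strongly $(p,q)$-chain intersecting. Since on the circle a full level contributes weight $\binom{n}{i}$ and the half level contributes $\binom{n-1}{(n-p-1)/2}$, its weight equals the claimed quantity.

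For the upper bound I would first dispatch the case $q\le p$. Here strongly $(p,q)$-chain intersecting implies $p$-complementing-chain-pair-free: if $\cF$ contained $F_1\subsetneq\dots\subsetneq F_p$ together with all complements, then $F_p$ tops a $p$-chain, while $\overline{F_p}\subsetneq\dots\subsetneq\overline{F_{p-q+1}}$ is a $q$-chain (legitimate because $q\le p$) whose bottom $\overline{F_p}$ is disjoint from $F_p$, a contradiction. Lemma \ref{begelem} then bounds the weight by that of the upper $(n+p)/2$ levels, and this works for every $n$.

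The real work is $q>p$, where $\cF$ may contain complementing chain-pairs and the bound from Lemma \ref{begelem} is too weak. I would split on whether $\cF$ contains a $q$-chain. If it does not, then $\cF$ is $(q-1)$-Sperner, so a weighted Erd\H os bound (the permutation method with full chains) gives weight at most that of the middle $q-1$ levels on the circle; for $n$ large this is strictly below the weight of the upper $(n+p)/2$ levels, and we are done. If $\cF$ does contain a $q$-chain, I would work with the interval families $\cT$ of tops of $p$-chains and $\cB$ of bottoms of $q$-chains, which cross-intersect by hypothesis. The guiding observation is that, having fixed $t:=(n-p)/2$, one cannot add to the family of all intervals of size $>t$ any interval $C$ of size at most $t$: its complement $\overline{C}$, of size at least $t+p$, would top a $p$-chain of large intervals while $C$ itself would bottom a $q$-chain of large intervals, and $\overline{C}\cap C=\emptyset$. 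I would convert this into a weight comparison by charging each small interval $C\in\cF$ (size at most $t$) to its complement $\overline{C}$, which has exactly the same weight $\tfrac1n\binom{n}{|C|}$; the cross-intersecting condition forces that for every small $C\in\cF$ either $\overline{C}\notin\cF$, or $\overline{C}$ fails to top a $p$-chain, or $C$ fails to bottom a $q$-chain, and in each case some large interval is missing from $\cF$. The weight gained on the small levels is then paid for by weight missing on the large levels.

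The main obstacle is exactly this last step. Near the threshold the binomial weights $\binom{n}{t},\binom{n}{t\pm 1},\dots$ are all of the same order $2^{n}/\sqrt n$, so the charging cannot be carried out crudely level by level: one must match the weight of each added small interval against a genuinely distinct missing large interval of at least equal weight and prove the resulting map is injective. The troublesome configurations are the "complementing pairs" $C,\overline{C}\in\cF$ with $C$ small, where the trivial charge $C\mapsto\overline{C}$ is unavailable and one must instead locate missing large intervals among those needed to realise the $p$-chain above $\overline{C}$ or the $q$-chain above $C$. Making this injection precise, together with controlling the half-level boundary correction when $n+p$ is odd and the finitely many short-chain small intervals that are not $q$-chain bottoms, is where the hypothesis that $n$ is sufficiently large is consumed, since it guarantees these exceptional contributions are dominated by the main term.
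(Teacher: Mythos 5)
Your lower bound verification and your two easy cases are correct: the upper $(n+p)/2$ levels on the circle are indeed strongly $(p,q)$-chain intersecting (your half-level argument for odd $n+p$ works), for $q\le p$ the hypothesis forces $p$-complementing-chain-pair-freeness so Lemma \ref{begelem} applies, and a family with no $q$-chain has weight $O(2^n/\sqrt n)$, which is negligible for large $n$. But the heart of the lemma is your final case ($q>p$ and $\cF$ contains a $q$-chain), and there you have described a strategy, not a proof. The charging map from small intervals to missing large intervals is never constructed; you yourself isolate exactly the configurations ($C,\overline{C}\in\cF$ with $C$ small) where the natural charge is unavailable, and you do not show how to find, for each of them, a genuinely distinct missing large interval of at least equal weight, nor that the resulting assignment is injective. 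Since all weights near level $n/2$ have the same order $2^n/\sqrt n$, and the total slack available is itself only $\Theta(2^n/\sqrt n)$ (the target weight is $nx/2-(p-1)\binom{n}{(n-p+1)/2}/2$, where $x$ is the maximal chain-pair weight), an uncontrolled loss of even a few charges per level destroys the bound. This is a genuine gap, not a routine detail to be filled in.

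The paper closes this case without any injection, and its idea is more robust than level-by-level accounting. Split instead on whether $\cF$ contains intervals $A_1\subsetneq\dots\subsetneq A_p$ together with all their complements. If not, Lemma \ref{begelem} gives the bound outright (this also subsumes your third case, for any relation between $p$ and $q$). If yes, let $\cC$ be the chain-pair through these intervals, with constituent chains $\cA$ and $\cB$. A $q$-chain of $\cF$ inside $\cA$ would have its bottom contained in $A_1$, hence disjoint from $\overline{A_1}$, which tops a $p$-chain of $\cF$; symmetrically for $\cB$ with $\overline{A_p}$ and $A_p$. So $|\cC\cap\cF|\le 2q-2$, and the weight of $\cF$ on this one chain-pair is at most $y=\Theta(2^n/\sqrt n)$, whereas every chain-pair trivially carries weight at most $x>2^{n-1}$. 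Summing over the $n$ rotated chain-pairs and dividing by two (each interval other than $\emptyset$ and $[n]$ lies in exactly two of them) gives $w(\cF)\le \bigl((n-1)x+y\bigr)/2+O(n)=nx/2-\Omega(2^n)$, which beats the required $nx/2-\Theta(2^n/\sqrt n)$ once $n$ is large. In short: a single sparse chain-pair, worth a deficit of $\Omega(2^n)$, pays for the entire slack, and no matching of individual intervals is ever needed.
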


\begin{proof}
As we have mentioned, the statement holds on the chain-pair. More precisely, if $\cC$ is a chain-pair and $\cF_0$ is a strongly $(p,q)$-chain intersecting subfamily of $\cC$, then we have two options. If a chain $\cA$ in $\cC$ contains at least $q$ members, then the other chain $\cB$ can have at most $p-1$ members that do not intersect the bottom of $\cA$. Therefore, $\cF_0$ is the union of an intersecting (thus complement-free) family and a $(p-1)$-Sperner family on one chain. The maximal weight is obtained when the $p-1$ sets are in the middle of $\cB$ and we pick one set from each complement-pair. The upper $(n+p)/2$ levels give equality here, let $x$ denote their total weight. In the other case both chains contain at most $q-1$ sets, the maximal weight is obtained when we take $q-1$ levels in the middle, let $y$ denote the total weight of them. Clearly $x>2^{n-1}$ and it is well known that $y=\Theta(2^n/\sqrt{n})$, thus $x>y$ if $n$ is sufficiently large.

Let $\cF$ be a strongly $(p,q)$-chain intersecting family on the circle. If $\cF$ is $p$-complementing-chain-pair-free, then we are done by Lemma \ref{begelem}. Assume that $\cF$ contains $A_1\subsetneq A_2\subsetneq \dots A_p$ together with their complements. Let $\cA$ denote a chain on the circle containing $A_1,\dots,A_p$, $\cB$ denote the complement chain and $\cC$ denote the chain-pair $\cA\cup\cB$. If $|\cA\cap \cF|\ge q$, then the bottom member of $\cF$ in $\cA$ is a subset of $A_1$, thus disjoint from $\overline{A_1}$, which is the top of a chain of length $p$, contradicting the strongly $(p,q)$-chain intersecting. Analogously, $|\cB\cap \cF|\ge q$ is also impossible, thus we have $|\cC\cap \cF|\le 2q-2$.

Let $\cC_i$ denote the chain-pair we obtain from $\cC$ by replacing each interval $[a,b]$ by $[a+i,b+i]$. Observe that each interval on the circle except for $\emptyset$ and $[n]$ is contained in exactly two of the chain-pairs $\cC_i$, $0\le i\le n-1$. Clearly, $\emptyset$ and $[n]$ are in every chain-pair and have weight 1. Therefore, $2w(\cF\cap\cG)=\sum_{i=0}^{n-1}w(\cC_i)\le (n-1)x+y+2n-4$. 

Observe that $nx/2$ is the upper bound on the weight we get by using the chain-pairs without any further ideas. This is the weight of the upper $(n+p-1)/2$ levels plus half the weight of the next level (level $(n-p+1)/2$) on the circle, i.e., $n\binom{n}{(n-p+1)/2}/2$. The conjectured optimal construction is the upper $(n+p)/2$ levels, thus its weight is the weight of the upper $(n+p-1)/2$ levels plus $(n-p+1)\binom{n}{(n-p+1)/2}/2$. Therefore, this weight as exactly $nx/2-(p-1)\binom{n}{(n-p+1)/2}/2$.
The upper bound $(n-1)x+y+2n-4$ we obtained is at most $nx/2-c2^n$ for some constant $c$. If $n$ is sufficiently large, then $(p-1)\binom{n}{(n-p+1)/2}/2<c2^n$, completing the proof.
\end{proof}

Theorem \ref{thm1} follows from the above lemma by applying the permutation method as described in Section \ref{two}.
Let us continue with Theorem \ref{thm2}. Recall that it states that $(2,2)$-totally chain intersecting families have cardinality at most $2^{n-1}$.

\begin{proof}[Proof of Theorem \ref{thm2}]
    Let $\cF$ be a $(2,2)$-totally chain intersecting family. Let $\cG$ be the subfamily of sets that are both maximal and minimal in $\cF$. For each pair $G,\overline{G}$ of complement sets in $\cG$, we pick one of them
    and place these sets to $\cG_1$. Let $\cG_2=\cG\setminus \cG_1$ and let $\cF_1=\cF\setminus \cG_2$.

    We claim that $\cF_1$ is complement-free. Indeed, assume that $F,\overline{F}\in \cF_1$. If one of them, say $F$ is in $\cG$, then $F\in\cG_1$, but then $\overline{F}\in\cG_2$, thus $\overline{F}\not\in \cF_1$, a contradiction.
    Therefore, $F,\overline{F}\not\in \cG$, thus both $F$ and $\overline{F}$ are contained in some chain of length 2. The bottom sets in those chains must be disjoint, creating a forbidden configuration.

    Clearly, $\cG_2$ is complement-free. $\cF_1$ and $\cG_2$ are cross-Sperner since the elements of $\cG_2$ are not related to any set in $\cF$. Therefore, we can apply Theorem \ref{hilt} to complete the proof.
\end{proof}

\bigskip

\textbf{Competing Interests}: The author has no relevant financial or non-financial interests to disclose.

\smallskip

\textbf{Data availability}: Data sharing not applicable to this article as no datasets were generated or analysed during the current study.

\end{document}